\def\DateTime{10/February/2010, 9:00 (Kyoto)}
\def\Version{Version $1.0$}
\def\yes{\if00}
\def\no{\if01}
\def\iftenpt{\no}
\def\ifelevenpt{\yes}
\def\iftwelvept{\no}
\def\ifusepdf{\no}
\def\ifpsfont{\yes}
\theoremstyle{plain}
\newtheorem{Theorem}{Theorem}[section]
\newtheorem{Proposition}[Theorem]{Proposition}
\newtheorem{Lemma}[Theorem]{Lemma}
\newtheorem{Claim}{Claim}[Theorem]
\theoremstyle{definition}
\newtheorem{Remark}[Theorem]{Remark}
\def\rom{\textup}
\newcommand{\ZZ}{{\mathbb{Z}}}
\newcommand{\QQ}{{\mathbb{Q}}}
\newcommand{\RR}{{\mathbb{R}}}
\newcommand{\CC}{{\mathbb{C}}}
\newcommand{\PP}{{\mathbb{P}}}
\newcommand{\Proj}{\operatorname{Proj}}
\newcommand{\adeg}{\widehat{\operatorname{deg}}}
\newcommand{\avol}{\widehat{\operatorname{vol}}}
\newcommand{\aH}{\hat{H}^0}
\newcommand{\Tpsh}{\operatorname{PSH}}
\newcommand{\rest}[2]{\left.{#1}\right\vert_{{#2}}}
\begin{document}

\title[Explicit computations of Zariski decompositions on $\PP^1_{\ZZ}$]%
{Explicit computations of Zariski decompositions on $\PP^1_{\ZZ}$}
\author{Atsushi Moriwaki}
\address{Department of Mathematics, Faculty of Science,
Kyoto University, Kyoto, 606-8502, Japan}
\email{moriwaki@math.kyoto-u.ac.jp}
\date{\DateTime, (\Version)}
\subjclass{Primary 14G40; Secondary 11G50}


\maketitle


\renewcommand{\theTheorem}{\Alph{Theorem}}
\section*{Introduction}

Let $\PP^1_{\ZZ} = \Proj(\ZZ[x, y])$, $C_0 = \{ x = 0 \}$, $C_{\infty} = \{ y = 0 \}$ and $z = x/y$.
For $a, b \in \RR_{>0}$, we define a $C_0$-Green function $g_{a,b}$ of ($\Tpsh \cap C^{\infty}$)-type on $\PP^1(\CC)$ and
an arithmetic divisor $\overline{D}_{a,b}$ of $(\Tpsh \cap C^{\infty})$-type on $\PP^1_{\ZZ}$
to be
\[
g_{a,b} := -\log \vert z \vert^2 + \log (a \vert z \vert^2 + b)\quad\text{and}\quad
\overline{D}_{a,b} := (C_0, g_{a,b}).
\]
In this note, we will observe several properties of $\overline{D}_{a,b}$ and 
give the exact form of the Zariski decomposition of $\overline{D}_{a,b}$.
The following results indicate us that $\overline{D}_{a,b}$ has
richer structure than what we expected.

\begin{Theorem}
Let $\varphi_{a,b} : [0, 1] \to \RR$ be a function given by
\[
\varphi_{a,b}(x) := (1-x)\log a + x \log b - x \log x - (1-x)\log(1-x) \quad(0 \leq x \leq 1),
\]
and let 
$\Theta_{a,b} := \{ x \in [0, 1] \mid \varphi_{a,b}(x) \geq 0 \}$.
Then the following properties hold for $\overline{D}_{a,b}$:

\begin{enumerate}
\renewcommand{\labelenumi}{(\arabic{enumi})}
\item
$\overline{D}_{a,b}$ is ample if and only if $a > 1$ and $b > 1$.

\item
$\overline{D}_{a,b}$ is nef if and only if $a \geq 1$ and $b \geq 1$.

\item
$\overline{D}_{a,b}$ is big if and only if $a + b > 1$.

\item
$\overline{D}_{a,b}$ is pseudo-effective if and only if $a + b \geq 1$.

{\nopagebreak
\begin{figure}[h]
\begin{center}
\unitlength=0.5mm
\begin{picture}(110,110)
\put(-5,0){\vector(1,0){100}}
\put(0,-5){\vector(0,1){100}}
\put(0,50){\line(1,-1){50}}
\put(50,50){\line(1,0){50}}
\put(50,50){\line(0,1){50}}
\put(97,-1){$a$}
\put(-1,97){$b$}
\put(13,15){\text{\tiny \rom{Not}}}
\put(2,10){\text{\tiny \rom{Pseudo-effective}}}
\put(70,70){\text{\tiny \rom{Ample}}}
\put(35,35){\text{\tiny \rom{Big}}}
\put(80,40){\vector(0,1){10}}
\put(60,35){\text{\tiny \rom{Nef on the boundary}}}
\put(40,20){\vector(-1,0){10}}
\put(41,18.5){\text{\tiny \rom{Pseudo-effective on the boundary}}}
\put(50,0){\circle*{2}}
\put(0,50){\circle*{2}}
\put(50,50){\circle*{2}}
\put(45,-5){\tiny \rom{(1,0)}}
\put(-11,47){\tiny \rom{(0,1)}}
\put(45,45){\tiny \rom{(1,1)}}
\end{picture}
\caption{Geography of $\overline{D}_{a,b}$}
\end{center}
\end{figure}
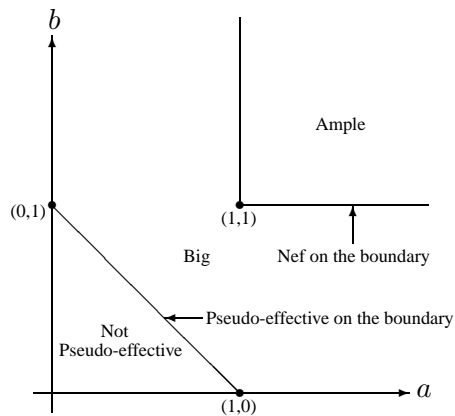}

\item
$\aH(\PP^1_{\ZZ}, n \overline{D}_{a,b}) \not= \{ 0 \}$ if and only if
$n \Theta_{a,b} \cap \ZZ \not= \emptyset$. As consequences, we have the following:

\begin{enumerate}
\renewcommand{\labelenumii}{(\arabic{enumi}.\arabic{enumii})}
\item
We assume that $a + b = 1$. For a positive integer $n$,
\[
\aH(\PP^1_{\ZZ}, n \overline{D}_{a,b}) = \begin{cases}
\{ 0, \pm z^{-nb} \} & \text{if $nb \in \ZZ$},\\
\{ 0 \} & \text{if $nb \not\in \ZZ$}.
\end{cases}
\]
In particular, if $b \not\in \QQ$, then $\aH(\PP^1_{\ZZ}, n \overline{D}_{a, b}) = \{ 0 \}$ for all $n \geq 1$.

\item
For any positive integer $n$,
there exist $a, b \in \RR_{>0}$ such that
$\overline{D}_{a,b}$ is big and 
\[
\aH(\PP^1_{\ZZ}, l \overline{D}_{a,b} ) = \{ 0 \}
\]
for all $l$ with
$1 \leq l \leq n$.
\end{enumerate}

\item
${\displaystyle \left\langle \aH(\PP^1_{\ZZ}, n \overline{D}_{a,b}) \right\rangle_{\ZZ} = \bigoplus_{i \in n \Theta_{a,b} \cap \ZZ} \ZZ z^{-i}}$
if $n \Theta_{a,b} \cap \ZZ \not= \emptyset$.

\item
\rom{(}Integral formula\rom{)}\quad
${\displaystyle \avol(\overline{D}_{a,b}) = \int_{\Theta_{a,b}} \varphi_{a, b}(x) dx}$.

\item
\rom{(}Zariski decomposition\rom{)}\quad
The Zariski decomposition exists if and only if $a + b \geq 1$.
Moreover, the positive part of $\overline{D}_{a,b}$ is given by $(\theta_{a,b}C_0 - \vartheta_{a,b}C_{\infty}, p_{a,b})$,
where $\vartheta_{a,b} = \inf \Theta_{a,b}$, $\theta_{a,b} = \sup \Theta_{a,b}$ and
\[
\hspace{4em}
p_{a,b}(z) =\begin{cases}
-\theta_{a,b} \log \vert z \vert^2 & \text{if $\vert z \vert \leq \sqrt{\frac{b(1-\theta_{a,b})}{a\theta_{a,b}}}$}, \\
-\log \vert z \vert^2 + \log (a \vert z \vert^2 + b) & 
\text{if $\sqrt{\frac{b(1-\theta_{a,b})}{a\theta_{a,b}}} < \vert z \vert < \sqrt{\frac{b(1-\vartheta_{a,b})}{a\vartheta_{a,b}}}$}, \\
-\vartheta_{a,b} \log \vert z \vert^2 & \text{if $\vert z \vert \geq \sqrt{\frac{b(1-\vartheta_{a,b})}{a\vartheta_{a,b}}}$}.
\end{cases}
\]
In particular, if $a + b = 1$, then the positive part is $b\widehat{(z)}$.
\end{enumerate}
\end{Theorem}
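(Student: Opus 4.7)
The plan is to run everything through the one-variable function $\varphi_{a,b}$, for which a direct computation yields: strict concavity on $[0,1]$; endpoint values $\varphi_{a,b}(0)=\log a$, $\varphi_{a,b}(1)=\log b$; and the unique maximum $\varphi_{a,b}(b/(a+b))=\log(a+b)$ obtained by solving $\varphi_{a,b}'(x)=\log((1-x)b/(xa))=0$. Parts (1)--(4) follow quickly: the curvature $dd^c g_{a,b}$ is strictly positive for all $a,b>0$, so arithmetic ampleness and nefness of $\overline{D}_{a,b}$ reduce to positivity of $g_{a,b}$ on $\PP^1(\CC)\setminus C_0$; as a function of $t=|z|^2$, $g_{a,b}$ is strictly decreasing with limits $\log b$ (smooth part at $t=0$) and $\log a$ (at $t=\infty$), yielding (1) and (2). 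For (3) and (4), the integral formula (7) gives bigness iff $\max\varphi_{a,b}>0$ iff $a+b>1$, and pseudo-effectivity iff $\max\varphi_{a,b}\ge 0$ iff $a+b\ge 1$.

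The heart of (5)--(7) is a Taylor-coefficient estimate. Writing a small section as $f=\sum_{i=0}^n c_iz^{-i}\in H^0(\PP^1_\ZZ,nC_0)$ and setting $P(z)=fz^n=\sum_{j=0}^n c_{n-j}z^j$, the norm condition is $|P(z)|^2\le(a|z|^2+b)^n$ for all $z\in\CC$. Cauchy's formula on the circle $|z|=r$ gives $|c_{n-j}|^2\le r^{-2j}(ar^2+b)^n$, and minimizing over $r$ at the critical point $r^2=jb/((n-j)a)$ produces the fundamental inequality $|c_i|^2\le\exp(n\varphi_{a,b}(i/n))$, so $c_i=0$ whenever $i/n\notin\Theta_{a,b}$. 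The same extremum calculation shows $\max_{z\in\CC}|z|^{2(n-i)}/(a|z|^2+b)^n=\exp(-n\varphi_{a,b}(i/n))$, so each monomial $z^{-i}$ with $i\in n\Theta_{a,b}\cap\ZZ$ is itself a small section; this proves (6), from which (5) is immediate. For (5.1), $\Theta_{a,b}=\{b\}$ when $a+b=1$ forces $|c_{nb}|^2\le 1$; and (5.2) is obtained by picking an irrational $x^*\in(0,1)$ and setting $a=(1-x^*)s$, $b=x^*s$ with $s>1$ sufficiently close to $1$, so $l\Theta_{a,b}$ misses $\ZZ$ for all $1\le l\le n$.

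For (7), the coefficient bound gives the upper estimate $\log\#\aH(\PP^1_\ZZ,n\overline{D}_{a,b})\le\sum_{i\in n\Theta_{a,b}\cap\ZZ}(n\varphi_{a,b}(i/n)+\log 3)$, which becomes $\int_{\Theta_{a,b}}\varphi_{a,b}(x)\,dx$ after dividing by $n^2/2$ and passing to Riemann sums. The matching lower bound is the most delicate step: one constructs tuples $(c_i)$ with $|c_i|\lesssim\exp((1-\epsilon)n\varphi_{a,b}(i/n)/2)$ and verifies, via the concavity of $\varphi_{a,b}$ and a triangle-inequality / grouping argument, that these produce genuine small sections, then lets $\epsilon\to 0$.

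Finally (8) rests on the observation that in $s=\log|z|^2$, the Green function $g_{a,b}(s)=-s+\log(ae^s+b)$ is strictly convex with $g_{a,b}'\in(-1,0)$. A line $-\alpha s$ lies pointwise below $g_{a,b}$ iff it is tangent at $s^*_\alpha=\log(b(1-\alpha)/(a\alpha))$, and the tangency condition $g_{a,b}(s^*_\alpha)+\alpha s^*_\alpha=0$ unwinds exactly to $\varphi_{a,b}(\alpha)=0$; so the admissible slopes are $\alpha\in\Theta_{a,b}$, with extremes $\vartheta_{a,b}\le\theta_{a,b}$. The Green function $p_{a,b}$ is then the largest convex-in-$s$ function bounded by $g_{a,b}$ that is linear outside a compact interval, built by gluing $-\theta_{a,b}s$ and $-\vartheta_{a,b}s$ to $g_{a,b}$ at the two tangency points ($C^1$ gluing is automatic). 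To finish I verify that $\overline{P}_{a,b}=(\theta_{a,b}C_0-\vartheta_{a,b}C_\infty,p_{a,b})$ is nef (psh-ness from convexity; horizontal arithmetic degrees vanish by $\varphi_{a,b}(\theta_{a,b})=\varphi_{a,b}(\vartheta_{a,b})=0$), that $\overline{D}_{a,b}-\overline{P}_{a,b}=(0,g_{a,b}-p_{a,b})$ is effective by the tangent-envelope construction, and that $\overline{P}_{a,b}$ is maximal among nef sub-divisors; nonexistence of a Zariski decomposition when $a+b<1$ is immediate from (4). The subtlest obstacle is the maximality, which reduces to the fact that no $\alpha\notin\Theta_{a,b}$ admits $-\alpha s\le g_{a,b}$ pointwise.
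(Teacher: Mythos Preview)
Your route to (5) and (6) via the Cauchy coefficient estimate is genuinely more direct than the paper's. The paper computes the $L^2$ inner product $\langle z^{-i},z^{-j}\rangle_{ng_{a,b}}=\delta_{ij}\big/\big((n+1)\binom{n}{i}a^{n-i}b^i\big)$ and then argues asymptotically: if $\phi=\sum c_iz^{-i}$ is small then so is $\phi^l$, and the $L^2$ bound on $\phi^l$ combined with a Stirling-type estimate on $\binom{nl}{kl}$ as $l\to\infty$ forces $\varphi_{a,b}(k/n)\ge 0$ at the extreme indices. Your single inequality $|c_i|^2\le\exp(n\varphi_{a,b}(i/n))$, obtained by optimising the Cauchy bound over the radius, kills every $c_i$ with $i/n\notin\Theta_{a,b}$ in one stroke and needs no limiting argument; the sup-norm identity $\Vert z^{-i}\Vert_{ng_{a,b}}^2=\exp(-n\varphi_{a,b}(i/n))$ is common to both approaches.

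Two steps, however, are not yet closed. For the lower bound in (7), the triangle-inequality sketch does not work as written: with $|c_i|\lesssim\exp((1-\epsilon)n\varphi_{a,b}(i/n)/2)$ one gets
\[
\Big\Vert\sum_i c_iz^{-i}\Big\Vert_{ng_{a,b}}\le\sum_i|c_i|\,\Vert z^{-i}\Vert_{ng_{a,b}}\le\sum_i\exp\!\big(-\tfrac{\epsilon n}{2}\varphi_{a,b}(i/n)\big),
\]
a sum of $O(n)$ terms each close to $1$ when $i/n$ is near $\partial\Theta_{a,b}$, so the bound blows up rather than staying $\le 1$. One can repair this with a second cutoff $\varphi_{a,b}(i/n)\ge\delta$ and a double limit $\epsilon,\delta\to 0$, but the paper bypasses the issue entirely by switching to the $L^2$ norm (its Lemma~2.1 shows $\avol$ is computed equally well by $L^2$-small sections), where orthogonality turns the small-section set into an explicit ellipsoid and Minkowski's theorem plus a box bound give matching lattice-point estimates.

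For (8), your maximality argument constrains only the \emph{slopes}: you show that a line $-\alpha s$ lies below $g_{a,b}$ iff $\alpha\in\Theta_{a,b}$, hence the divisor part of any nef sub-divisor is at most $P_{a,b}=\theta_{a,b}C_0-\vartheta_{a,b}C_\infty$. But this does not exclude a nef $(P_{a,b},q)$ with $p_{a,b}\le q\le g_{a,b}$ and $q\ne p_{a,b}$. The paper pins down $q$ with the maximum principle: on the inner disk $|z|\le\sqrt{b(1-\theta_{a,b})/(a\theta_{a,b})}$ the function $u=q-p_{a,b}$ is subharmonic (because $p_{a,b}$ is harmonic there), non-negative, zero on the bounding circle (where $p_{a,b}=g_{a,b}\ge q$), and has $u(0)=\adeg\big((P_{a,b},q)\big|_{C_0}\big)=0$ since the positive part is nef with degree $\le\adeg(\overline{P}_{a,b}|_{C_0})=0$; hence $u\equiv 0$, and similarly near $\infty$. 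One small slip: the identity $\varphi_{a,b}(\theta_{a,b})=0$ that you invoke for nefness fails when $\theta_{a,b}=1$ (i.e.\ $b\ge 1$); the vanishing $\adeg(\overline{P}_{a,b}|_{C_0})=0$ comes simply from $p_{a,b}(z)+\theta_{a,b}\log|z|^2\to 0$ as $z\to 0$, by construction.
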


I would like to express my thanks to Prof. Yuan.
The studies of this note  started from his question.
I also thank  Dr. Uchida.
Without his calculation of the limit of a sequence,
I could not find the positive part of $\overline{D}_{a,b}$.

\renewcommand{\theTheorem}{\arabic{section}.\arabic{Theorem}}
\renewcommand{\theClaim}{\arabic{section}.\arabic{Theorem}.\arabic{Claim}}
\renewcommand{\theequation}{\arabic{section}.\arabic{Theorem}.\arabic{Claim}}

\section{Fundamental properties of the characteristic function}
\label{sec:fund:prop:func}
Let $\PP^1_{\ZZ} = \Proj(\ZZ[x, y])$, $C_0 = \{ x = 0 \}$, $C_{\infty} = \{ y = 0 \}$ and $z = x/y$.
Let us fix positive real numbers $a$ and $b$.
We set 
\[
g_{a,b} = -\log \vert z \vert^2 + \log (a \vert z \vert^2 + b)\quad\text{and}\quad
\Phi_{a,b} = dd^c(\log (a \vert z \vert^2 + b))
\]
on $\PP^1(\CC)$.
It is easy to see that $g_{a,b}$ is a $C_0$-Green function of $(\Tpsh \cap C^{\infty})$-type and
\[
\Phi_{a,b} = \frac{ab}{2\pi\sqrt{-1}( a \vert z \vert^2 + b)^2}dz \wedge d\bar{z}.
\]
Note that $H^0(\PP^1_{\ZZ}, nC_0) = \bigoplus_{0 \leq i \leq n} \ZZ z^{-i}$.
According as \cite{MoArZariski}, $\vert\cdot\vert_{ng_{a,b}}$, $\Vert\cdot\Vert_{ng_{a,b}}$ and
$\langle\cdot,\cdot\rangle_{ng_{a,b}}$ are defined by
\begin{align*}
& \vert\phi\vert_{ng_{a,b}} := \vert \phi \vert \exp(-ng_{a,b}/2),\quad
\Vert\phi\Vert_{ng_{a,b}} := \sup \{ \vert\phi\vert_{ng_{a,b}}(x) \mid x \in \PP^1(\CC) \},
\intertext{and}
& \langle\phi,\psi \rangle_{ng_{a,b}} := \int_{\PP^1(\CC)} \phi \bar{\psi} \exp(-ng_{a,b}) \Phi_{a,b},
\end{align*}
where $\phi, \psi \in H^0(\PP^1(\CC), nC_0)$.
Moreover, we define a function $\varphi_{a,b} : [0, 1] \to \RR$ to be
\[
\varphi_{a,b}(x) = (1-x)\log a + x \log b - x \log x - (1-x)\log(1-x)  \quad(0 \leq x \leq 1),
\]
which is called the {\em characteristic function of $g_{a,b}$}. 
The function $\varphi_{a,b}$ play a key role in this note.
Here note that
\[
\begin{cases}
\varphi_{a,b}(0) = \log a,\\
\varphi_{a,b}(1) = \log b, \\
\max \{ \varphi_{a,b}(x) \mid x \in [0,1] \} = \varphi_{a,b}(b/(a+b)) = \log(a+b).
\end{cases}
\]
Moreover, $\varphi_{a,b}$ is strictly increasing on $[0, b/(a+b)]$ and
$\varphi_{a,b}$ is strictly decreasing on $[b/(a+b),1]$.
Notably the function $\varphi_{1,1}$ is very similar to the binary entropy function
$H(x) = - x \log_2 x - (1-x)\log_2(1-x)$.

\begin{Proposition}
\label{prop:cal:inner:product}
For $i, j, n \in \ZZ$ with $0 \leq i, j \leq n$, we have the following:
\begin{enumerate}
\renewcommand{\labelenumi}{(\arabic{enumi})}
\item
$\Vert z^{-i} \Vert_{ng_{a,b}}^2 = \exp(-n \varphi_{a,b}(i/n))$.

\item
\[
\langle z^{-i}, z^{-j} \rangle_{ng_{a,b}} = \begin{cases}
0 & \text{if $i \not= j$}, \\
\\
{\displaystyle \frac{1}{(n+1)\binom{n}{i}a^{n-i}b^i}} & \text{if $i=j$}.
\end{cases}
\]
\end{enumerate}
\end{Proposition}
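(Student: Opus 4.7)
\smallskip
\noindent\textbf{Proof plan.}
Both statements are direct computations in polar coordinates $z = re^{i\theta}$, exploiting the fact that everything in sight depends only on $|z|^2$ except for the monomials $z^{-i}\bar z^{-j}$, whose angular dependence is controlled by the orthogonality of the characters $e^{ik\theta}$.

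For (1), expand $\vert z^{-i}\vert^2_{ng_{a,b}} = |z|^{-2i}\exp(-ng_{a,b}) = |z|^{2(n-i)}(a|z|^2+b)^{-n}$, set $t = |z|^2$, and maximize the smooth function $f(t) := t^{n-i}(at+b)^{-n}$ on $[0,\infty)$. A logarithmic derivative computation shows that the unique critical point of $f$ lies at $t^\ast = (n-i)b/(ia)$ (for $0<i<n$), and substitution gives $at^\ast + b = nb/i$, so
\[
f(t^\ast) = \frac{((n-i)b)^{n-i}}{(ia)^{n-i}} \cdot \frac{i^n}{n^n b^n}
            = \frac{1}{a^{n-i}b^i}\left(\frac{i}{n}\right)^i\left(\frac{n-i}{n}\right)^{n-i},
\]
which matches $\exp(-n\varphi_{a,b}(i/n))$ on inspection. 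The boundary cases $i=0$ and $i=n$, where the supremum is achieved as $|z|\to\infty$ or at $z=0$, give $a^{-n}$ and $b^{-n}$ respectively, and these coincide with $\exp(-n\varphi_{a,b}(0))$ and $\exp(-n\varphi_{a,b}(1))$ since $\varphi_{a,b}(0)=\log a$ and $\varphi_{a,b}(1)=\log b$.

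For (2), write $z = re^{i\theta}$ so that $\tfrac{i}{2}dz\wedge d\bar z = r\,dr\,d\theta$ and $\Phi_{a,b}$ becomes the rotationally invariant measure $\frac{ab}{\pi (ar^2+b)^2}\,r\,dr\,d\theta$ on $\CC$. Since
\[
z^{-i}\bar z^{-j}\exp(-ng_{a,b}) = r^{2n-i-j}(ar^2+b)^{-n}e^{i(j-i)\theta},
\]
the $\theta$-integral produces the factor $\int_0^{2\pi}e^{i(j-i)\theta}d\theta = 2\pi\delta_{ij}$, which immediately yields the vanishing assertion for $i\neq j$. When $i=j$, the remaining radial integral is
\[
\langle z^{-i},z^{-i}\rangle_{ng_{a,b}} = 2ab\int_0^\infty \frac{r^{2(n-i)+1}}{(ar^2+b)^{n+2}}\,dr.
\]
The substitution $u = ar^2/b$ turns this into a Beta integral, namely
\[
\frac{1}{a^{n-i}b^{i}}\int_0^\infty \frac{u^{n-i}}{(u+1)^{n+2}}\,du
= \frac{1}{a^{n-i}b^{i}} \cdot B(n-i+1, i+1) = \frac{(n-i)!\,i!}{(n+1)!\,a^{n-i}b^{i}},
\]
which is exactly $\bigl((n+1)\binom{n}{i}a^{n-i}b^{i}\bigr)^{-1}$.

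\smallskip
\noindent The only real pitfall is bookkeeping: one must be careful with the normalization of $\Phi_{a,b}$ so that the constant $ab/\pi$ matches the one used in the Beta-function evaluation, and one should sanity-check by verifying that $\Phi_{a,b}$ has total mass $1$ on $\PP^1(\CC)$. Beyond that, both (1) and (2) are one-variable calculus; no estimate or deep input is needed.
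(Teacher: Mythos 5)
Your proposal is correct and follows essentially the same route as the paper: for (1) a one-variable maximization of $t^{n-i}(at+b)^{-n}$ at $t^{\ast}=(n-i)b/(ia)$, and for (2) polar coordinates plus orthogonality of $e^{\sqrt{-1}k\theta}$ to reduce to a radial integral. The only difference is that you evaluate that radial integral in closed form as a Beta integral $B(n-i+1,i+1)$, whereas the paper derives the same value $\frac{(n-i)!\,i!}{(n+1)!\,a^{n-i}b^{i}}$ by the recursion $I(k,l)=\frac{l-k}{a(l+1)}I(k,l-1)$ with base case $I(l,l)=\frac{1}{(l+1)b^{l}}$; both are complete and equally elementary.
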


\begin{proof}
(1) By the definition of $\vert z^{-i} \vert_{ng_{a,b}}$, we can see
\[
\log \vert z^{-i} \vert_{ng_{a,b}}^2 = (n-i) \log \vert z \vert^2 - n \log(a\vert z \vert^2 + b).
\]
If we set $f(x) = (n-i) \log x - n \log(ax + b)$ for $ x \geq 0$ ($f(0)$ might be $-\infty$),
then
\[
\max \{ f(x) \mid x \geq 0 \} = f\left(\frac{(n-i)b}{ia}\right) = -n \varphi_{a,b}(i/n).
\]
Thus (1) follows.

\medskip
(2) First of all,
\[
\langle z^{-i}, z^{-j} \rangle_{ng_{a,b}} = \frac{ab}{2\pi\sqrt{-1}} \int_{\PP^1} \frac{z^{n-i} \bar{z}^{n-j}}{(a \vert z \vert^2 + b)^{n+2}} d z \wedge d\bar{z}.
\]
If we set $z = r^{1/2} \exp(\sqrt{-1}t)$, then the above integral is equal to
\[
\frac{ab}{2\pi} \int_{0}^{\infty} \frac{r^{n - (i+j)/2}}{(ar + b)^{n+2}} \left( 
\int_{0}^{2\pi} \exp((j-i)\sqrt{-1}t)  dt \right) dr,
\]
and hence
\[
\langle z^{-i}, z^{-j} \rangle_{ng_{a,b}} =
\begin{cases}
0 & \text{if $i \not= j$}, \\
\\
{\displaystyle ab\int_{0}^{\infty} \frac{r^{(n - i)}}{(ar + b)^{n+2}} dr} & \text{if $i=j$}.
\end{cases}
\]
For $k, l \in \ZZ$ with $0 \leq k \leq l$, we denote
\[
ab\int_{0}^{\infty} \frac{r^{(l - k)}}{(ar + b)^{l+2}} dr
\]
by $I(k, l)$.
It is easy to see that
\[
I(l,l) = \frac{1}{(l+1)b^l}\quad\text{and}\quad
I(k, l) = \frac{l-k}{a(l+1)} I(k, l-1)\ \ (0 \leq k < l).
\]
Thus the proposition follows.
\end{proof}

Next we observe the following lemma:

\begin{Lemma}
\label{lem:limit:binom}
Let $\{ n_l \}_{l=1}^{\infty}$ and $\{ c_l \}_{l=1}^{\infty}$ be sequences of integers satisfying the following properties:
\begin{enumerate}
\renewcommand{\labelenumi}{(\alph{enumi})}
\item $0 < n_1 < n_2 < \cdots < n_{l} < n_{l+1} < \cdots$.

\item $0 \leq c_l \leq n_l$ for all $l \geq 1$.

\item $\gamma = \lim_{l\to\infty} c_l/n_l$ exists.
\end{enumerate}
Let $\{ d_l \}$ be a sequence of positive real numbers such that $\lim_{l\to\infty} \log(d_l)/(n_l+1) = 0$.
Then we have the following:
\begin{enumerate}
\renewcommand{\labelenumi}{(\arabic{enumi})}
\item
If ${\displaystyle \binom{n_l}{c_l}a^{n_l-c_l}b^{c_l} \geq d_l}$ for all $l \geq 1$, then $\varphi_{a,b}(\gamma) \geq 0$.

\item
If ${\displaystyle \binom{n_l}{c_l}a^{n_l-c_l}b^{c_l} \leq d_l}$ for all $l \geq 1$, then $\varphi_{a,b}(\gamma) \leq 0$.
\end{enumerate}
\end{Lemma}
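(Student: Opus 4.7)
The plan is to identify $\varphi_{a,b}$ as the exponential growth rate of $\binom{n}{c}a^{n-c}b^{c}$ as $n \to \infty$ with $c/n \to x$, which is classical and reduces to Stirling's formula. Concretely, I would prove the single convergence statement
\[
\lim_{l\to\infty} \frac{1}{n_l}\log\!\left( \binom{n_l}{c_l}a^{n_l - c_l}b^{c_l} \right) = \varphi_{a,b}(\gamma),
\]
from which both (1) and (2) follow at once: taking $\log$ of the given inequality, dividing by $n_l+1$ (which is asymptotic to $n_l$), passing to the limit, and using the hypothesis $\log(d_l)/(n_l+1)\to 0$ gives the sign of $\varphi_{a,b}(\gamma)$.

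For the central computation I would apply Stirling in the form $\log m! = m\log m - m + O(\log(m+1))$, uniformly in $m \geq 0$ (with $0! = 1$ giving the base case). When $1 \leq c_l \leq n_l - 1$, writing $x_l := c_l/n_l$, this yields
\[
\log\binom{n_l}{c_l} = -n_l \bigl( x_l \log x_l + (1-x_l)\log(1-x_l) \bigr) + O(\log n_l),
\]
so that
\[
\frac{1}{n_l}\log\!\left( \binom{n_l}{c_l}a^{n_l-c_l}b^{c_l} \right) = \varphi_{a,b}(x_l) + O\!\left(\frac{\log n_l}{n_l}\right).
\]
Since $\varphi_{a,b}$ extends continuously to $[0,1]$ under the convention $0\log 0 = 0$, and $x_l \to \gamma$, the right-hand side tends to $\varphi_{a,b}(\gamma)$. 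The boundary cases $c_l \in \{0, n_l\}$ are handled directly: $c_l = 0$ gives $\binom{n_l}{0}a^{n_l} = a^{n_l}$, so the expression equals $\log a = \varphi_{a,b}(0)$ exactly, and symmetrically for $c_l = n_l$. A subsequence argument then covers sequences that mix boundary and interior values.

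The main technical point is making sure the Stirling error term is negligible even when $c_l$ or $n_l - c_l$ is small (so $x_l$ approaches $0$ or $1$). The uniform form $\log m! = m\log m - m + O(\log(m+1))$ suffices, since the error contributes $O(\log n_l)$ which vanishes after division by $n_l$; the potential issue $x_l \log x_l \to 0$ as $x_l \to 0^+$ is automatic from continuity of $t\log t$ at $0$. Once the asymptotic is in place, the hypothesis $\log(d_l)/(n_l+1)\to 0$ is precisely what is needed to compare the two sides of the bound against $\varphi_{a,b}(\gamma)$, and the two assertions of the lemma drop out symmetrically.
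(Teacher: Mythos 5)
Your proof is correct, and the overall skeleton is the one the paper uses: identify $\varphi_{a,b}(\gamma)$ as the limit of $\tfrac{1}{n_l}\log\bigl(\binom{n_l}{c_l}a^{n_l-c_l}b^{c_l}\bigr)$, then take logarithms of the hypothesized inequality, divide by $n_l+1$, and let the assumption $\log(d_l)/(n_l+1)\to 0$ kill the right-hand side. Where you differ is in how the binomial asymptotic is established. You invoke Stirling's formula in the uniform form $\log m! = m\log m - m + O(\log(m+1))$, which correctly yields $\tfrac{1}{n_l}\log\binom{n_l}{c_l} = -x_l\log x_l - (1-x_l)\log(1-x_l) + O(\log n_l/n_l)$ and handles the boundary cases $c_l\in\{0,n_l\}$; your worry about small $c_l$ or $n_l-c_l$ is indeed absorbed by the $O(\log(m+1))$ error and the continuity of $t\log t$ at $0$, so no gap there (the subsequence argument you mention is not even needed, since the estimate is uniform). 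The paper instead avoids Stirling entirely: it writes $\tfrac{1}{n+1}\log\binom{n}{i}$ as a Riemann sum of the decreasing function $\log(1/t-1)$ and sandwiches it between two integrals, using the identity $\varphi_{a,b}(x) = (1-x)\log a + x\log b + \int_0^x\log(1/t-1)\,dt$. This produces an explicit two-sided inequality (the displayed estimate labelled (1.2.1)) rather than an $O(\cdot)$ asymptotic; that sharper form is reused verbatim later in the volume computation of Claim 2.2.1, which is the main thing the paper's route buys over yours. For the lemma itself, both arguments are complete and of comparable difficulty.
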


\begin{proof}
First of all, let us see the following claim:
\begin{Claim}
\label{claim:lem:limit:binom}
For $0 \leq i \leq n$,
\[
\int_{1/(n+1)}^{(i+1)/(n+1)} \log\left( \frac{1}{t} - 1\right) dt \leq \frac{1}{n+1} \log \binom{n}{i} \leq \int_0^{i/(n+1)} \log\left( \frac{1}{t} - 1\right) dt.
\]
\end{Claim}

\begin{proof}
Indeed, if $i=0$, then the assertion is obvious. If $i \geq 1$,
then
\[
\frac{1}{n+1} \log \binom{n}{i}  = \frac{1}{n+1} 
\sum_{k=1}^{i} \log \left( \frac{1}{\frac{k}{n+1}} - 1 \right).
\]
In addition, since $\log(1/x - 1)$ is strictly decreasing,
\[
\int_{k/(n+1)}^{(k+1)/(n+1)} \log\left( \frac{1}{t} - 1\right) dt  \leq \frac{1}{n+1} \log \left( \frac{1}{\frac{k}{n+1}} - 1 \right) \leq \int_{(k-1)/(n+1)}^{k/(n+1)} \log\left( \frac{1}{t} - 1\right) dt.
\]
Therefore the claim follows.
\end{proof}

Note that
\[
\varphi_{a,b}(x) = (1-x)\log a + x \log b + \int_{0}^x  \log\left( \frac{1}{t} - 1\right) dt.
\]
Thus the above estimate implies that
\addtocounter{Claim}{1}
\begin{multline}
\label{lem:limit:binom:eqn:1}
\varphi_{a,b}((i+1)/(n+1)) - \varphi_{1,1}(1/(n+1)) - \frac{1}{n+1} \log b \\
\leq \frac{1}{n+1} \log \left(\binom{n}{i}a^{n-i}b^i \right) \leq \varphi_{a,b}(i/(n+1)) - \frac{1}{n+1} \log a,
\end{multline}
and hence the lemma follows.
\end{proof}

Let $\overline{D}_{a,b}$ be an arithmetic divisor of ($\Tpsh \cap C^{\infty}$)-type on $\PP^1_{\ZZ}$ 
given by
\[
\overline{D}_{a, b} = (C_0, g_{a,b}) = (C_0, -\log\vert z \vert^2 + \log (a \vert z \vert^2 + b)).
\]
Moreover, we set
\[
\Theta_{a,b} = \{ x \in [0,1] \mid \varphi_{a,b}(x) \geq 0 \}.
\]
Note that if $\Theta_{a,b} \not= \emptyset$, then
\[
\Theta_{a,b} = \{ x \in [0,1] \mid \inf \Theta_{a,b} \leq x \leq 
\sup \Theta_{a,b} \}.
\]
Finally we consider the following proposition:

\begin{Proposition}
\label{prop:criterion:QQ:effective}
Let us fix a positive integer $n$. Then we have the following:
\begin{enumerate}
\renewcommand{\labelenumi}{(\arabic{enumi})}
\item
$\aH(\PP^1_{\ZZ}, n \overline{D}_{a,b}) \not= \{ 0 \}$ if and only if
$n \Theta_{a,b} \cap \ZZ \not= \emptyset$.

\item
If $n \Theta_{a,b} \cap \ZZ \not= \emptyset$,
then $\langle \aH(\PP^1_{\ZZ}, n \overline{D}_{a,b}) \rangle_{\ZZ} = \bigoplus_{i \in n \Theta_{a,b} \cap \ZZ} \ZZ z^{-i}$.
\end{enumerate}
\end{Proposition}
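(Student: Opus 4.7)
The plan is to derive both (1) and (2) from a single per-coefficient lower bound on the sup-norm of an arbitrary integral section $\phi=\sum_{j=0}^n a_j z^{-j}\in H^0(\PP^1_\ZZ,nC_0)$ with $a_j\in\ZZ$.

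The easy halves are immediate from Proposition~\ref{prop:cal:inner:product}(1): if $i\in n\Theta_{a,b}\cap\ZZ$, then $\varphi_{a,b}(i/n)\geq 0$, whence $\Vert z^{-i}\Vert_{ng_{a,b}}^2=\exp(-n\varphi_{a,b}(i/n))\leq 1$, so $z^{-i}\in\aH(\PP^1_\ZZ,n\overline{D}_{a,b})$. This gives the ``if'' direction of (1) and the inclusion $\bigoplus_{i\in n\Theta_{a,b}\cap\ZZ}\ZZ z^{-i}\subseteq\langle\aH(\PP^1_\ZZ,n\overline{D}_{a,b})\rangle_{\ZZ}$ in (2).

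For the converse, the crucial point is that the weight $\exp(-ng_{a,b})=|z|^{2n}/(a|z|^2+b)^n$ is constant on each circle $|z|=r$. Writing $\phi(re^{i\theta})=\sum_j a_j r^{-j} e^{-ij\theta}$ and applying Parseval on the circle yields
\[
\sup_{|z|=r}|\phi(z)|^2\;\geq\;\frac{1}{2\pi}\int_0^{2\pi}|\phi(re^{i\theta})|^2\,d\theta=\sum_{j=0}^n |a_j|^2 r^{-2j}\;\geq\;|a_j|^2 r^{-2j}
\]
for each individual $j$. Multiplying by the rotationally symmetric weight,
\[
\Vert\phi\Vert_{ng_{a,b}}^2\;\geq\;\frac{|a_j|^2\,r^{2(n-j)}}{(ar^2+b)^n}\qquad(r>0,\ 0\leq j\leq n).
\]
Taking the supremum in $r$ and invoking exactly the elementary calculus already carried out in the proof of Proposition~\ref{prop:cal:inner:product}(1), we get
\[
\Vert\phi\Vert_{ng_{a,b}}^2\;\geq\;|a_j|^2\exp\bigl(-n\varphi_{a,b}(j/n)\bigr)\qquad(0\leq j\leq n).
\]
If $\phi\in\aH(\PP^1_\ZZ,n\overline{D}_{a,b})$, i.e.\ $\Vert\phi\Vert_{ng_{a,b}}\leq 1$, then $|a_j|^2\leq\exp(n\varphi_{a,b}(j/n))$; since $a_j\in\ZZ$, either $a_j=0$ or $\varphi_{a,b}(j/n)\geq 0$, equivalently $j\in n\Theta_{a,b}\cap\ZZ$. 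Any nonzero $\phi$ therefore forces $n\Theta_{a,b}\cap\ZZ\ne\emptyset$, finishing (1); moreover $\phi\in\bigoplus_{i\in n\Theta_{a,b}\cap\ZZ}\ZZ z^{-i}$, giving the inclusion $\subseteq$ in (2).

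The place where I expect to need the most care is precisely this per-coefficient lower bound. The global $L^2$-orthogonality from Proposition~\ref{prop:cal:inner:product}(2) is too coarse to rule out delicate integer cancellations among the monomials $z^{-j}$. Replacing it by the circle-by-circle Fourier orthogonality above, and then tuning the radius $r$ separately to each index $j$, is what isolates a single coefficient $a_j$ in a true sup-norm estimate and makes the integrality step work.
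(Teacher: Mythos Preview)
Your proof is correct, and it takes a genuinely different (and more elementary) route than the paper's.

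The paper does \emph{not} work directly with the sup-norm at level $n$. Instead it passes to powers: given $\phi=\sum c_i z^{-i}\in\aH(\PP^1_\ZZ,n\overline{D}_{a,b})$ with extremal nonzero coefficients at $k_1$ and $k_2$, it notes that $\phi^l\in\aH(\PP^1_\ZZ,ln\overline{D}_{a,b})$ and applies the $L^2$-orthogonality of Proposition~\ref{prop:cal:inner:product}(2) to $\phi^l$ to get $(nl+1)\binom{nl}{k_sl}a^{nl-k_sl}b^{k_sl}\geq 1$ for all $l$. It then invokes the asymptotic Lemma~\ref{lem:limit:binom} to pass to the limit $l\to\infty$ and conclude $\varphi_{a,b}(k_s/n)\geq 0$. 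Finally it uses the interval structure of $\Theta_{a,b}$ to fill in all indices between $k_1$ and $k_2$.

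Your Parseval-on-circles trick sidesteps all of this: by tuning the radius to each index $j$ you obtain the sharp per-coefficient inequality $\Vert\phi\Vert_{ng_{a,b}}^2\geq |a_j|^2\exp(-n\varphi_{a,b}(j/n))$ directly at level $n$, with no powers, no Stirling-type estimate, and no appeal to the convexity of $\Theta_{a,b}$. What it buys is a cleaner, self-contained argument that never leaves the sup-norm world; what the paper's approach buys is that its $L^2$ machinery (Proposition~\ref{prop:cal:inner:product}(2) and Lemma~\ref{lem:limit:binom}) is reused in the volume computation of Theorem~\ref{thm:positivity:D:a:b}, so the detour is not wasted. Your closing remark that the global $L^2$-orthogonality is ``too coarse'' is slightly off the mark: the paper \emph{does} make it work, but only after amplifying via $\phi\mapsto\phi^l$ and taking a limit.
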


\begin{proof}
Let us begin with the following claim:

\begin{Claim}
If $\phi \in \aH(\PP^1_{\ZZ}, n \overline{D}_{a,b}) \setminus \{ 0 \}$,
then $n \Theta_{a,b} \cap \ZZ \not= \emptyset$ and
$\phi \in \bigoplus_{i \in n \Theta_{a,b} \cap \ZZ} \ZZ z^{-i}$.
\end{Claim}

\begin{proof}
We set $\phi = \sum_{i=0}^n c_i z^{-i}$, where $c_0, \ldots, c_n \in \ZZ$.
Let $k_1 = \min \{ i \mid c_i \not= 0 \}$ and $k_2 = \max \{ i \mid c_i \not= 0 \}$.
Then $\phi = \sum_{k_1 \leq i \leq k_2} c_i z^{-i}$.
Note that $\phi^l \in \aH(\PP^1_{\ZZ}, ln\overline{D}_{a,b})$, so that $\langle \phi^l, \phi^l \rangle_{lng_{a,b}} \leq 1$.
Moreover, 
\[
\phi^l = (c_{k_1})^l z^{-k_1l} + (c_{k_2})^l z^{-k_2l} + \sum_{k_1l < j < k_2l} c_{l, j} z^{-j}
\]
and
\begin{multline*}
\langle \phi^l, \phi^l \rangle_{lng_{a,b}}  = \frac{(c_{k_1})^{2l}}{(nl+1)\binom{nl}{k_1l}a^{nl-k_1l}b^{k_1l}} + 
\frac{(c_{k_2})^{2l}}{(nl+1)\binom{nl}{k_2l}a^{nl-k_2l}b^{k_2l}} \\
+
\sum_{k_1l < j < k_2l} \frac{(c_{l,j})^2}{(nl+1)\binom{nl}{j}a^{nl-j}b^j}
\end{multline*}
by Proposition~\ref{prop:cal:inner:product}.
Therefore, 
\[
(nl+1)\binom{nl}{k_1l}a^{nl-k_1l}b^{k_1l} \geq 1\quad\text{and}\quad (nl+1)\binom{nl}{k_2l}a^{nl-k_2l}b^{k_2l} \geq 1
\]
for all $l \geq 1$, and hence
$\varphi_{a,b}(k_1/n) \geq 0$ and $\varphi_{a,b}(k_2/n) \geq 0$ by Lemma~\ref{lem:limit:binom}.
Therefore, $k_1, k_2 \in n \Theta_{a,b} \cap \ZZ$, which yields $\{ i \in \ZZ \mid k_1 \leq i \leq k_2 \} \subseteq n\Theta_{a,b} \cap \ZZ$,
as required.
\end{proof}

Note that $\Vert z^{-i} \Vert_{ng} =  \exp(-n\varphi_{a,b}(i/n))$ by Proposition~\ref{prop:cal:inner:product}.
Thus
(1) and (2) follow from the above claim.
\end{proof}

\section{Integral formula and Geography of $\overline{D}_{a,b}$}
Let $X$ be a $d$-dimensional generically smooth, normal and projective arithmetic variety.
Let $\overline{D} = (D, g)$ be an arithmetic $\RR$-divisor of $C^0$-type on $X$.
Let $\Phi$ be an $F_{\infty}$-invariant volume form on $X(\CC)$ with ${\displaystyle \int_{X(\CC)} \Phi = 1}$.
For $\phi, \psi \in H^0(X, D)$, $\langle \phi, \psi \rangle_g$ and $\Vert \phi \Vert_{g, L^2}$ are given by
\[
\langle \phi, \psi \rangle_g := \int_{X(\CC)} \phi \bar{\psi} \exp(-g) \Phi\quad\text{and}\quad
\Vert \phi \Vert_{g, L^2} := \sqrt{\langle \phi, \phi \rangle_g}.
\]
We set
\[
\aH_{L^2}(X, \overline{D}) := \{ \phi \in H^0(X, D) \mid \Vert \phi\Vert_{g, L^2} \leq 1 \}.
\]
Let us begin with the following lemma:

\begin{Lemma}
\label{lem:vol:L:2}
${\displaystyle \avol(\overline{D}) = \lim_{n\to\infty} \frac{\log \# \aH_{L^2}(X, n\overline{D})}{n^d/d!}}$.
\end{Lemma}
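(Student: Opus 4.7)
The plan is to sandwich $\aH_{L^2}(X, n\overline{D})$ between two sup-norm lattices whose cardinalities have $\avol(\overline{D})$ as their common limit, and then to remove the resulting perturbation by continuity of the arithmetic volume.

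\emph{Easy inclusion.} Since $\int_{X(\CC)}\Phi = 1$ and $\vert\phi\vert^2\exp(-ng) \leq \Vert\phi\Vert_{ng}^2$ pointwise on $X(\CC)$, I immediately get $\Vert \phi \Vert_{ng, L^2} \leq \Vert \phi \Vert_{ng}$. This gives the trivial inclusion $\aH(X, n\overline{D}) \subseteq \aH_{L^2}(X, n\overline{D})$, and dividing by $n^d/d!$ and applying the definition of $\avol$ yields
\[
\avol(\overline{D}) \leq \liminf_{n\to\infty} \frac{\log \#\aH_{L^2}(X, n\overline{D})}{n^d/d!}.
\]

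\emph{Reverse inclusion.} Here I will invoke an arithmetic Gromov-type inequality: there exist constants $C>0$ and $k \in \ZZ_{>0}$, depending on $(X, \overline{D}, \Phi)$ but not on $n$, such that
\[
\Vert \phi \Vert_{ng} \leq Cn^{k}\, \Vert \phi \Vert_{ng, L^2} \qquad \text{for all } n \geq 1,\ \phi \in H^0(X, nD).
\]
Writing $\lambda_n := 2(k\log n + \log C)$, this is equivalent to the inclusion
\[
\aH_{L^2}(X, n\overline{D}) \subseteq \aH\bigl(X, n\overline{D} + (0, \lambda_n)\bigr).
\]
For any $\epsilon > 0$ we have $\lambda_n \leq n\epsilon$ once $n$ is sufficiently large, so the right-hand side is contained in $\aH(X, n(\overline{D} + (0, \epsilon)))$. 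Taking logarithms, dividing by $n^d/d!$, and invoking the definition of $\avol$ for the $C^0$-type divisor $\overline{D} + (0, \epsilon)$ gives
\[
\limsup_{n\to\infty} \frac{\log \#\aH_{L^2}(X, n\overline{D})}{n^d/d!} \leq \avol\bigl(\overline{D} + (0, \epsilon)\bigr).
\]
Letting $\epsilon \to 0^+$ and appealing to continuity of $\avol$ for arithmetic $\RR$-divisors of $C^0$-type (as developed in \cite{MoArZariski}), the right-hand side tends to $\avol(\overline{D})$. Combined with the easy inclusion, this finishes the proof.

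The one genuinely analytic step, and the main obstacle, is the arithmetic Gromov inequality $\Vert\phi\Vert_{ng} \leq Cn^k\Vert\phi\Vert_{ng, L^2}$ with $C, k$ independent of $n$. For $C^\infty$-type Green functions this is classical and can be proved by a Bergman kernel or mean-value argument on the compact complex manifold $X(\CC)$; the $C^0$-case follows from the smooth case by sandwiching $g$ between smooth Green functions that approximate it uniformly. The second, softer, ingredient is the continuity $\avol(\overline{D}+(0,\epsilon))\to\avol(\overline{D})$ as $\epsilon\to 0^+$, which is a direct consequence of the $L^1$-continuity of the arithmetic volume in the sup-norm of the Green function, and which is the foundational continuity theorem underlying the paper's framework.
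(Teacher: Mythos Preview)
Your proof is correct and follows essentially the same route as the paper: the trivial inclusion $\aH \subseteq \aH_{L^2}$ for the lower bound, Gromov's inequality $\Vert\cdot\Vert_{\sup}\le Cn^{k}\Vert\cdot\Vert_{L^2}$ (the paper cites \cite[Proposition~3.1.1]{MoArZariski} with $k=d-1$) to embed $\aH_{L^2}(X,n\overline{D})$ into $\aH(X,n(\overline{D}+(0,\epsilon)))$ for $n\gg 0$, and then the continuity of $\avol$ to let $\epsilon\to 0$. The only cosmetic difference is that the paper passes directly to $\exp(n\epsilon/2)$ rather than introducing the intermediate $\lambda_n$, but the arguments are otherwise identical.
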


\begin{proof}
Since $\aH(X, n\overline{D}) \subseteq \aH_{L^2}(X, n\overline{D})$, we have
\[
\avol(\overline{D}) \leq \liminf_{n\to\infty} \frac{\log \# \aH_{L^2}(X, n\overline{D})}{n^d/d!}.
\]
On the other hand, by using Gromov's inequality (cf. \cite[Proposition~3.1.1]{MoArZariski}), 
there is a constant $C$ such that $\Vert\cdot\Vert_{\sup} \leq Cn^{d-1}\Vert\cdot\Vert_{L^2}$
on $H^0(X, nD)$.
Thus, for any positive number $\epsilon$,
$\Vert\cdot\Vert_{\sup} \leq \exp(n\epsilon/2) \Vert\cdot\Vert_{L^2}$ holds for $n \gg 1$.
This implies that 
\[
\aH_{L^2}(X, n\overline{D}) \subseteq \aH(X, n(\overline{D} + (0, \epsilon)))
\]
for $n \gg 1$, which yields
\[
 \limsup_{n\to\infty} \frac{\log \# \aH_{L^2}(X, n\overline{D})}{n^d/d!} \leq \avol(\overline{D} + (0, \epsilon)).
\]
Therefore, by virtue of the continuity of $\avol$, we have
\[
 \limsup_{n\to\infty} \frac{\log \#\aH_{L^2}(X, n\overline{D})}{n^d/d!} \leq \avol(\overline{D}),
\]
and hence the lemma follows.
\end{proof}

From now on,
we use the same notation as in Section~\ref{sec:fund:prop:func}.
The purpose of this section is to prove the following theorem:

\begin{Theorem}
\label{thm:positivity:D:a:b}
\begin{enumerate}
\renewcommand{\labelenumi}{(\arabic{enumi})}
\item \rom{(}Integral formula\rom{)}
\[
\avol(\overline{D}_{a,b}) = \int_{\Theta_{a,b}} \varphi_{a,b}(x) dx.
\]

\item
$\overline{D}_{a,b}$ is ample if and only if $a > 1$ and $b > 1$.

\item
$\overline{D}_{a,b}$ is nef if and only if $a \geq 1$ and $b \geq 1$.

\item
$\overline{D}_{a,b}$ is big if and only if $a + b > 1$.

\item
$\overline{D}_{a,b}$ is pseudo-effective if and only if $a + b \geq 1$.

\item
If $a + b = 1$, then
\[
\aH(\PP^1_{\ZZ}, n\overline{D}_{a,b}) =
\begin{cases}
\{ 0, \pm z^{-nb} \} & \text{if $nb \in \ZZ$}, \\
\{ 0 \} & \text{if $nb \not\in \ZZ$}.
\end{cases}
\]  
\end{enumerate}
\end{Theorem}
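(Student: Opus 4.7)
The plan is to prove the integral formula (1) first via Lemma~\ref{lem:vol:L:2}, then extract (2)--(5) from it by combining with explicit restrictions to horizontal sections and a comparison inequality, and finally deduce (6) directly from Proposition~\ref{prop:criterion:QQ:effective}.

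For (1), the first step is to use Lemma~\ref{lem:vol:L:2} to reduce $\avol(\overline D_{a,b})$ to the asymptotics of $\log\#\aH_{L^2}(\PP^1_\ZZ, n\overline D_{a,b})$. By Proposition~\ref{prop:cal:inner:product}(2), the monomials $\{z^{-i}\}_{0\leq i\leq n}$ are an orthogonal $\ZZ$-basis of $H^0(\PP^1_\ZZ,nC_0)$ for $\langle\cdot,\cdot\rangle_{ng_{a,b}}$, with squared $L^2$-norms $1/((n+1)\binom{n}{i}a^{n-i}b^i)$. Hence $\aH_{L^2}(\PP^1_\ZZ, n\overline D_{a,b})$ identifies with the lattice points of the diagonal ellipsoid whose semi-axes $r_i$ satisfy
\[
\frac{2}{n}\log r_i = \varphi_{a,b}(i/n) + O((\log n)/n)\qquad\text{uniformly in }i,
\]
by Stirling's formula (essentially Claim~\ref{claim:lem:limit:binom}). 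A standard two-sided count for lattice points in an axis-aligned ellipsoid -- bounded above by $\prod_i(2\lfloor r_i\rfloor+1)$, and below (by Minkowski) by the same product restricted to indices with $r_i\geq\sqrt{n+1}$ -- then yields
\[
\log\#\aH_{L^2}(\PP^1_\ZZ, n\overline D_{a,b}) = \sum_{i:\,\varphi_{a,b}(i/n)>0}\frac{n}{2}\varphi_{a,b}(i/n) + o(n^2),
\]
since indices with $\varphi_{a,b}(i/n)>0$ (fixed positive) automatically satisfy $r_i\gg\sqrt{n+1}$, while the boundary indices are $O(1)$ in number. The right-hand side is a Riemann sum for $(n^2/2)\int_{\Theta_{a,b}}\varphi_{a,b}(x)\,dx$, so Lemma~\ref{lem:vol:L:2} with $d=2$ gives (1).

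Once (1) is in hand, part (4) is immediate: $\avol(\overline D_{a,b})>0$ iff $\varphi_{a,b}$ is positive somewhere in $[0,1]$, iff $\max_{[0,1]}\varphi_{a,b}=\log(a+b)>0$. For (3), the curvature $\Phi_{a,b}$ is already semipositive (Section~\ref{sec:fund:prop:func}), so nefness reduces to nonnegativity of $\adeg$ on every horizontal prime divisor. A direct computation, using that $g_{a,b}(m/n)=\log((am^2+bn^2)/m^2)$ and that $(C_0\cdot\sigma_{m/n})_{\mathrm{fin}}=\log m$ for the horizontal section $\sigma_{m/n}$ parametrized by a primitive pair $(m,n)$, gives
\[
\adeg(\rest{\overline D_{a,b}}{\sigma_{m/n}}) = \tfrac12\log(am^2 + bn^2);
\]
this is $\geq 0$ for all primitive $(m,n)\neq(0,0)$ iff $a\geq 1$ and $b\geq 1$ (take $(m,n)=(1,0),(0,1)$ for necessity). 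The strict-inequality version gives (2). For (5), the identity $\overline D_{a,b}+(0,\epsilon)=\overline D_{ae^\epsilon,be^\epsilon}$ together with (4) shows that $a+b\geq 1$ implies pseudo-effectivity. For the converse, fix $c,d>1$ so that $\overline D_{c,d}$ is ample by (2); the concavity of $\log$ yields the pointwise bound
\[
g_{a,b}+tg_{c,d}\leq(1+t)\,g_{(a+tc)/(1+t),\,(b+td)/(1+t)},
\]
and for $t>0$ small enough that $a+b+t(c+d)\leq 1+t$, the dominating divisor has volume zero by (4); hence $\overline D_{a,b}+t\overline D_{c,d}$ is not big and $\overline D_{a,b}$ is not pseudo-effective.

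Part (6) is a one-line corollary of Proposition~\ref{prop:criterion:QQ:effective}: when $a+b=1$, $\max\varphi_{a,b}=\log 1=0$ is attained only at $x=b/(a+b)=b$, so $\Theta_{a,b}=\{b\}$ and $n\Theta_{a,b}\cap\ZZ$ is empty unless $nb\in\ZZ$, in which case its $\ZZ$-span is $\ZZ z^{-nb}$; Proposition~\ref{prop:cal:inner:product}(1) gives $\|z^{-nb}\|^2_{ng_{a,b}}=\exp(-n\varphi_{a,b}(b))=1$, so only $0,\pm z^{-nb}$ have sup-norm at most $1$. The main technical obstacle I foresee is the uniformity of the lattice-point count in (1): the $r_i$ range over many orders of magnitude with $i$, so one must show that indices with $r_i<1$ (potentially $\Theta(n)$ in number) contribute only $o(n^2)$ to the log-count. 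A dyadic decomposition of the indices by the size of $r_i$, together with Minkowski's theorem for the large-$r_i$ range, should suffice, but care is required to make the error term genuinely small relative to the $\Theta(n^2)$ main term.
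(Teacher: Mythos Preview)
Your overall strategy matches the paper's closely (the $L^2$ lattice count for (1), then (4)--(6) as corollaries), and your variant for (5) via the concavity bound $g_{a,b}+tg_{c,d}\leq(1+t)\,g_{(a+tc)/(1+t),\,(b+td)/(1+t)}$ is a legitimate alternative to the paper's exact identity $\alpha\overline D_{ta,tb}+\beta\overline D_{sa,sb}=(\alpha+\beta)\overline D_{(t^\alpha s^\beta)^{1/(\alpha+\beta)}a,\,(t^\alpha s^\beta)^{1/(\alpha+\beta)}b}$ (Claim~\ref{claim:thm:positivity:D:a:b:2}); your shift $\overline D_{a,b}+(0,\epsilon)=\overline D_{ae^\epsilon,be^\epsilon}$ for the ``if'' direction of (5) is in fact slicker than the paper's perturbation. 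The ``technical obstacle'' you flag in (1) is largely harmless: indices $i$ with $r_i<1$ force $x_i=0$ for any lattice point of the ellipsoid, so they drop out of the count automatically. The paper sidesteps the issue entirely by first invoking Proposition~\ref{prop:criterion:QQ:effective} to restrict to $i\in n\Theta_{a,b}\cap\ZZ$ before doing the ellipsoid estimate; this also cleanly separates the case $a+b\leq 1$, where $\avol=0$ and $\Theta_{a,b}$ has measure zero, from the main case $a+b>1$.

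There is, however, a genuine gap in your (3). Your formula $\adeg(\rest{\overline D_{a,b}}{\sigma_{m/n}})=\tfrac12\log(am^2+bn^2)$ is correct, but it only treats the $\QQ$-rational horizontal sections, whereas nefness requires $\adeg\geq 0$ on \emph{every} horizontal prime divisor of $\PP^1_{\ZZ}$, including closures of closed points of $\PP^1_{\QQ}$ of degree $\geq 2$. The paper's remedy is short: since $g_{a,b}(z)=\log(a+b\vert z\vert^{-2})\geq\log a$ on $\PP^1(\CC)$, once $a\geq 1$ the arithmetic divisor $(C_0,g_{a,b})$ is effective, so its restriction to any horizontal $C\neq C_0$ has nonnegative degree (finite part $\geq 0$ because $C_0,C$ are distinct effective curves, archimedean part $\geq 0$ because $g_{a,b}\geq 0$); the remaining case $C=C_0$ gives $\log b$. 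A parallel issue occurs in (2): ``the strict-inequality version'' is not by itself an ampleness criterion. The paper invokes the arithmetic Nakai--Moishezon criterion, which also requires $\adeg(\overline D_{a,b}^2)=(\log(ab)+1)/2>0$; this is immediate when $a,b>1$, but you should state it.
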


\begin{proof}
First let us see the essential case of (1):

\begin{Claim}
\label{claim:thm:positivity:D:a:b:1}
If $a + b > 1$, then ${\displaystyle \avol(\overline{D}_{a,b}) = \int_{\Theta_{a,b}} \varphi_{a,b}(x) dx}$
\end{Claim}

\begin{proof}
In this case, since $\operatorname{vol}(\Theta_{a,b}) > 0$,
we can find a positive integer $n_0$ such that $n  \Theta_{a,b} \cap \ZZ \not = \emptyset$ for all $n \geq n_0$.
Let $\vartheta_n$ and $\theta_n$ be $\min (n  \Theta_{a,b} \cap \ZZ )$ and $\max (n  \Theta_{a,b} \cap \ZZ )$ respectively.
Then, by Proposition~\ref{prop:criterion:QQ:effective},
\[
\aH(\PP^1_{\ZZ}, n\overline{D}_{a,b}) \subseteq \left.\left\{ \phi \in \bigoplus_{i = \vartheta_n}^{\theta_n} \ZZ z^{-i} \ \right| \ 
\langle \phi, \phi \rangle_{ng_{a,b}} \leq 1 \right\}
\subseteq  \aH_{L^2}(\PP^1_{\ZZ}, n\overline{D}_{a,b}),
\]
which yields
\[
\avol(\overline{D}_{a,b}) = \lim_{n\to\infty} \frac{2\log \#\left.\left\{ \phi \in \bigoplus_{i = \vartheta_n}^{\theta_n} \ZZ z^{-i} \ \right| \ 
\langle \phi, \phi \rangle_{ng_{a,b}} \leq 1 \right\}}{(n+1)^2}
\]
by Lemma~\ref{lem:vol:L:2}.
We set
\[
K_n = \left\{ (x_{\vartheta_{n}}, \ldots, x_{\theta_{n}}) \in \RR^{\theta_{n} - \vartheta_{n} + 1}\ \left| \  
\sum_{i=\vartheta_{n}}^{\theta_{n}} \frac{x_i^2}{(n+1)\binom{n}{i}a^{n-i}b^i} \leq 1 \right\}\right. .
\]
Then, by Proposition~\ref{prop:cal:inner:product},
\[
\#  \left.\left\{ \phi \in \bigoplus_{i = \vartheta_n}^{\theta_n} \ZZ z^{-i} \ \right| \ 
\langle \phi, \phi \rangle_{ng_{a,b}} \leq 1 \right\}
= \# (K_n \cap \ZZ^{\theta_{n} - \vartheta_{n} + 1}).
\]
By Minkowski's theorem,
\[
\log \# (K_n \cap \ZZ^{\theta_{n} - \vartheta_{n} + 1}) \geq \log(\operatorname{vol}(K_n)) - (\theta_{n} - \vartheta_{n} + 1) \log(2).
\]
Note that
\[
 \log(\operatorname{vol}(K_n)) = \sum_{i=\vartheta_{n}}^{\theta_{n}}\log \left( \sqrt{(n+1)\binom{n}{i}a^{n-i}b^i}\right)
+ \log V_{\theta_{n} - \vartheta_{n} + 1},
\]
where $V_r = \operatorname{vol}( \{(x_{1}, \ldots, x_{r}) \in \RR^{r} \mid 
x_1^2 + \cdots + x_r^2 \leq 1\})$.
Thus
\begin{multline*}
\log \# (K_n \cap \ZZ^{\theta_{n} - \vartheta_{n} + 1}) \\
\geq  \sum_{i=\vartheta_{n}}^{\theta_{n}}\log \left( \sqrt{(n+1)\binom{n}{i}a^{n-i}b^i}\right)
+ \log V_{\theta_{n} - \vartheta_{n} + 1} - (\theta_{n} - \vartheta_{n} + 1) \log(2).
\end{multline*}
On the other hand, since
\[
K_n  \subseteq  \prod_{\vartheta_{n} \leq i \leq \theta_{n}} \left[ -\sqrt{(n+1)\binom{n}{i}a^{n-i}b^i},
\sqrt{(n+1)\binom{n}{i}a^{n-i}b^i} \right],
\]
we have
\[
\log \# (K_n \cap \ZZ^{\theta_{n} - \vartheta_{n} + 1})
\leq \sum_{i=\vartheta_{n}}^{\theta_{n}}\log \left( 2\sqrt{(n+1)\binom{n}{i}a^{n-i}b^i} + 1\right).
\]
Moreover, by Proposition~1.1, for $\vartheta_{a,b} \leq i \leq \theta_{a,b}$,
\[
(n+1)\binom{n}{i}a^{n-i}b^i = \frac{1}{\langle z^{-i}, z^{-i} \rangle_{ng_{a,b}}} \geq \exp(n\varphi_{a,b}(i/n)) \geq 1.
\]
Thus
\[
\log \# (K_n \cap \ZZ^{\theta_{n} - \vartheta_{n} + 1})
\leq \sum_{i=\vartheta_{n}}^{\theta_{n}}\log \left( \sqrt{(n+1)\binom{n}{i}a^{n-i}b^i}\right)  + (\theta_{n} - \vartheta_{n} + 1) \log(3)
\]
because $2 t + 1 \leq 3t$ for $t \geq 1$.
Therefore, it is sufficient to show that
\[
\lim_{n\to\infty} \frac{1}{(n+1)^2} \sum_{i=\vartheta_{n}}^{\theta_{n}}\log \left(\binom{n}{i}a^{n-i}b^i \right) =  \int_{\Theta_{a,b}} \varphi_{a,b}(x) dx
\]
because $\lim_{n\to\infty} \log (V_{\theta_{n} - \vartheta_{n} + 1})/n^2 = 0$.
By the estimate \eqref{lem:limit:binom:eqn:1}, we have
\begin{multline*}
\lim_{n\to\infty} \frac{1}{n+1} \sum_{i=\vartheta_{n}}^{\theta_{n}} \varphi_{a,b}\left(\frac{i+1}{n+1}\right) \\
\leq 
\lim_{n\to\infty} \frac{1}{(n+1)^2} \sum_{i=\vartheta_{n}}^{\theta_{n}}\log \left(\binom{n}{i}a^{n-i}b^i \right) \\
\leq \lim_{n\to\infty} \frac{1}{n+1} \sum_{i=\vartheta_{n}}^{\theta_{n}} \varphi_{a,b}\left(\frac{i}{n+1}\right).
\end{multline*}
Thus the claim follows because $\lim_{n\to\infty} \vartheta_n/n = \inf \Theta_{a,b}$ and $\lim_{n\to\infty} \theta_n/n = \sup \Theta_{a,b}$.
\end{proof}

Next let us see the following claim:
\begin{Claim}
\label{claim:thm:positivity:D:a:b:2}
If $s, t \in \RR_{>0}$ and $\alpha, \beta \in \RR$ with $\alpha + \beta \not= 0$,
then
\[
\alpha \overline{D}_{ta,tb} + \beta \overline{D}_{sa,  sb} = (\alpha + \beta)\overline{D}_{(t^{\alpha}s^{\beta})^{\frac{1}{\alpha + \beta}}a, \ (t^{\alpha}s^{\beta})^{\frac{1}{\alpha + \beta}}b}.
\]
\end{Claim}

\begin{proof}
This is a straightforward calculation.
\end{proof}

\medskip
(2) and (3):\quad
First of all, note that $\adeg(\rest{\overline{D}_{a,b}}{C_0}) = \log(b)$ and
$\adeg(\rest{\overline{D}_{a,b}}{C_{\infty}}) = \log(a)$.
Moreover, $\log(a+bx) \geq \log(a)$ for all $x \in \RR_{\geq 0}$.
Thus $\overline{D}_{a,b}$ is nef if and only if $a \geq 1$ and $b \geq 1$.
Further, $\adeg(\overline{D}_{a,b}^2) = (\log(ab) + 1)/2$.
Therefore, by the arithmetic Nakai-Moishezon's criterion,
$\overline{D}_{a,b}$ is ample if and only if $a > 1$ and $b > 1$.

\medskip
(6):\quad
In this case, $\Theta_{a, b} = \{ b \}$.
Moreover, if $nb \in \ZZ$, then
\[
\Vert z^{-nb} \Vert_{ng_{a,b}}^2 = \exp(-n \varphi_{a,b}(b)) = 1
\]
by Proposition~\ref{prop:cal:inner:product}.
Thus the assertion follows from Proposition~\ref{prop:criterion:QQ:effective}.

\medskip
(4) and (5):\quad
By using (6), in order to see (4) and (5), it is sufficient to show the following:

\begin{enumerate}
\renewcommand{\labelenumi}{(\roman{enumi})}
\item
$D_{a,b}$ is big if $a + b > 1$.

\item
$D_{a,b}$ is pseudo-effective if $a + b \geq 1$.

\item
$D_{a,b}$ is not pseudo-effective if $a + b < 1$.
\end{enumerate}

\medskip
(i) It follows from Claim~\ref{claim:thm:positivity:D:a:b:1} because $\operatorname{vol}(\Theta_{a,b}) > 0$.

\medskip
(ii) We choose a real number $t$ such that $t > 1$ and $\overline{D}_{ta,tb}$ is ample.
By Claim~\ref{claim:thm:positivity:D:a:b:2},
\[
\overline{D}_{a,b} + \epsilon \overline{D}_{ta, tb} = (1 + \epsilon) \overline{D}_{t^{\frac{\epsilon}{1 + \epsilon}}a, t^{\frac{\epsilon}{1 + \epsilon}}b}.
\]
For any $\epsilon > 0$,
since $t^{\frac{\epsilon}{1 + \epsilon}}(a + b) > 1$, 
$(1 + \epsilon) \overline{D}_{t^{\frac{\epsilon}{1 + \epsilon}}a, t^{\frac{\epsilon}{1 + \epsilon}}b}$ is big by (i),
which shows that $\overline{D}_{a,b}$ is pseudo-effective.

\medskip
(iii) 
Let us choose a positive real number $t$ such that $\overline{D}_{ta,tb}$ is ample.
We also choose a positive number $\epsilon$ such that if we set $a' = a t^{\frac{\epsilon}{1 + \epsilon}}$ and $b' = b t^{\frac{\epsilon}{1 + \epsilon}}$,
then $a' + b' < 1$.
We assume that $\overline{D}_{a,b}$ is pseudo-effective. Then 
\[
\overline{D}_{a,b} + \epsilon \overline{D}_{ta, tb} = (1 + \epsilon) \overline{D}_{a', b'}
\]
is big
by \cite[Proposition~6.3.2]{MoArZariski}, which means that $\overline{D}_{a', b'}$ is big.
On the other hand, as $a' + b' < 1$, we have
$\Theta_{a', b'} = \emptyset$.
Thus $\aH(\PP^1_{\ZZ}, n\overline{D}_{a', b'}) = \{ 0 \}$
for all $n \geq 1$ by Proposition~\ref{prop:criterion:QQ:effective}. This is a contradiction.

\bigskip
Finally let us see (1).
By Claim~\ref{claim:thm:positivity:D:a:b:1}, we may assume that $a + b \leq 1$.
In this case,
$\overline{D}_{a,b}$ is not big by (4) and
$\Theta_{a,b}$ is either $\emptyset$ or $\{ b \}$.
Thus the assertion follows.
\end{proof}

\begin{Remark}
By a straightforward calculation, we can see
\[
\adeg(\overline{D}_{a,b}^2) = \int_{[0,1]} \varphi_{a,b}(x) dx = (\log(ab) + 1)/2.
\]
In particular, $\adeg(\overline{D}_{a,b}^2) = \avol(\overline{D}_{a,b})$ if and only if
$\overline{D}_{a,b}$ is nef.
\end{Remark}

\bigskip
Finally let us consider the following proposition:

\begin{Proposition}
For any positive integer $n$, there exist rational numbers $a$ and $b$ such that
$0 < a < 1$, $0 < b < 1$, $a + b > 1$ and that
$\aH(\PP^1_{\ZZ}, l \overline{D}_{a,b}) = \{ 0 \}$ for $l=1, \ldots, n$.
\end{Proposition}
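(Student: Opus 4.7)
The plan is to reduce the statement via Proposition~\ref{prop:criterion:QQ:effective}, which says that $\aH(\PP^1_{\ZZ}, l\overline{D}_{a,b}) = \{0\}$ is equivalent to $l\Theta_{a,b} \cap \ZZ = \emptyset$. Since $\Theta_{a,b}$ is a closed subinterval of $[0,1]$, the strategy is to choose rational $a,b$ with $a+b$ only slightly larger than $1$, so that $\Theta_{a,b}$ is a tiny interval clustered around the point $b$, and to pick $b$ rational with a large enough denominator that $lb$ stays safely away from $\ZZ$ for $l=1,\ldots,n$.

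Concretely, I would first fix $b = 1/q$ for an integer $q > 2n$, so that for each $1 \leq l \leq n$ one has $lb = l/q \in (0, 1/2)$ and in particular $\dist(lb,\ZZ) = l/q$. Setting
\[
\eta := \tfrac{1}{2}\min_{1 \leq l \leq n} \frac{\dist(lb,\ZZ)}{l} = \frac{1}{2q},
\]
it would suffice to find a rational $a$ with $1-b < a < 1$ for which $\Theta_{a,b} \subseteq [b-\eta,\, b+\eta]$: each dilate $l\Theta_{a,b}$ is then contained in an interval of radius $l\eta \leq \dist(lb,\ZZ)/2$ around $lb$, hence contains no integer, and simultaneously $0 < a,b < 1$ and $a+b > 1$ are automatic.

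The key analytic step is thus to show that $\Theta_{a,b}$ shrinks to $\{b\}$ as $a \to (1-b)^+$. For this I would write $a = 1-b+\epsilon$ and exploit the fact that $\varphi_{a,b}(x)$ depends on $a$ only through the continuous bounded term $(1-x)\log a$, so $\varphi_{a,b} \to \varphi_{1-b,b}$ uniformly on $[0,1]$ as $\epsilon \to 0^+$. The limit $\varphi_{1-b,b}$ attains its maximum $\log((1-b)+b) = 0$ only at $x = b$, hence on the compact set $K := ([0, b-\eta] \cup [b+\eta, 1]) \cap [0,1]$ there exists $c > 0$ with $\varphi_{1-b,b} \leq -c$ on $K$. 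Uniform convergence then yields $\varphi_{a,b} < 0$ on $K$ for all sufficiently small $\epsilon$, forcing $\Theta_{a,b} \subseteq [b-\eta,\, b+\eta]$. Choosing such an $\epsilon$ to be rational (for instance $\epsilon = 1/N$ with $N$ large) produces the required $a$.

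The main obstacle I anticipate is purely bookkeeping: one must select $\eta$ before $\epsilon$, verify that $\varphi_{1-b,b}$ has a strict maximum at $x = b$ (immediate from the formula at the start of Section~\ref{sec:fund:prop:func}), and order the uniform-convergence estimate correctly. Once this is set up, everything follows from the explicit formula for $\varphi_{a,b}$ plus compactness of $[0,1]$, so beyond Proposition~\ref{prop:criterion:QQ:effective} no substantial new ingredient is needed.
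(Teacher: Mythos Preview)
Your proposal is correct and follows the same overall route as the paper: reduce via Proposition~\ref{prop:criterion:QQ:effective} to showing $l\Theta_{a,b}\cap\ZZ=\emptyset$ for $l\le n$, take $b$ rational and small (the paper uses $b'<1/n$, you use $b=1/q$ with $q>2n$), and perturb from the degenerate case $a+b=1$, where $\Theta_{a,b}=\{b\}$, so that $\Theta_{a,b}$ becomes a short interval inside $(0,1/n)$ which contains no rationals of denominator $\le n$.

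The one genuine technical difference is in how the perturbation is carried out. The paper scales both coordinates by a rational $\lambda>1$, exploiting the identity $\varphi_{\lambda a',\lambda b'}=\varphi_{a',b'}+\log\lambda$: this simply shifts the graph of $\varphi$ upward while keeping the location $b'=b/(a+b)$ of the maximum fixed, so the containment $\Theta_{a,b}\subset(0,1/n)$ follows from three explicit evaluations $\varphi_{a,b}(0)<0$, $\varphi_{a,b}(1/n)<0$, and $b/(a+b)<1/n$. Your argument instead fixes $b$, moves $a$ from $1-b$ upward, and appeals to uniform convergence of $\varphi_{a,b}$ to $\varphi_{1-b,b}$ together with strict concavity. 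Both work; the paper's scaling trick is slightly more explicit and avoids the compactness step, while your approach is more robust in that it does not rely on the special algebraic identity for $\varphi_{\lambda a,\lambda b}$.
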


\begin{proof}
Let us choose rational numbers $a'$ and $b'$ such that
$0 < a' < 1$, $0 < b' < 1/n$ and $a' + b' = 1$.
Since $\varphi_{a',b'}(1/n) < 0$, we can find
a rational number $\lambda$ such that
$\lambda > 1$, $\lambda a' < 1$, $\lambda b' < 1$ and $\varphi_{a',b'}(1/n) + \log(\lambda) < 0$.
Here we set $a = \lambda a'$ and $b = \lambda b'$. Then we have
\[
\Theta_{a,b} \subseteq \{ x \in \RR \mid 0 < x < 1/n \}
\]
because
\[
\begin{cases}
\varphi_{a,b}(0) = \log(a) < 0,\\
\varphi_{a,b}(1/n) = \varphi_{a', b'}(1/n) + \log(\lambda) < 0, \\
b/(a+b) = b' < 1/n.
\end{cases}
\]
We assume that $\aH(\PP^1_{\ZZ}, l \overline{D}_{a,b}) \not= \{ 0 \}$ for some $l$ with $1 \leq l \leq n$.
Then, by Proposition~\ref{prop:criterion:QQ:effective}, there is an integer $k$ such that $0 \leq k \leq l$ and $\varphi_{a,b}(k/l) \geq 0$.
Thus $k/l \in \Theta_{a,b}$, that is, $0 < k/l < 1/n$, which is a contradiction.
\end{proof}

\section{Zariski decomposition of $\overline{D}_{a,b}$}
We use the same notation as in Section~\ref{sec:fund:prop:func}.
In this section, let us consider the Zariski decomposition of $\overline{D}_{a,b}$.

\begin{Theorem}
\label{thm:zariski:decomp:PP:1}
The Zariski decomposition exists if and only if $a + b \geq 1$.
Moreover, if we set 
$\vartheta_{a,b} = \inf \Theta_{a,b}$, $\theta_{a,b} = \sup \Theta_{a,b}$,
$P_{a,b} = \theta_{a,b}C_0 - \vartheta_{a,b}C_{\infty}$ and
\[
p_{a,b}(z) =\begin{cases}
-\theta_{a,b} \log \vert z \vert^2 & \text{if $\vert z \vert \leq \sqrt{\frac{b(1-\theta_{a,b})}{a\theta_{a,b}}}$}, \\
-\log \vert z \vert^2 + \log (a \vert z \vert^2 + b) & 
\text{if $\sqrt{\frac{b(1-\theta_{a,b})}{a\theta_{a,b}}} < \vert z \vert < \sqrt{\frac{b(1-\vartheta_{a,b})}{a\vartheta_{a,b}}}$}, \\
-\vartheta_{a,b} \log \vert z \vert^2 & \text{if $\vert z \vert \geq \sqrt{\frac{b(1-\vartheta_{a,b})}{a\vartheta_{a,b}}}$},
\end{cases}
\]
then the positive part of $\overline{D}_{a,b}$ is $\overline{P}_{a,b} = (P_{a,b}, p_{a,b})$.
\end{Theorem}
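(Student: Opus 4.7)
The plan is to split the proof at the pseudo-effectivity threshold $a+b=1$. When $a+b<1$, $\overline{D}_{a,b}$ is not pseudo-effective by Theorem~\ref{thm:positivity:D:a:b}(5); since the positive part of any Zariski decomposition is nef (hence pseudo-effective) and the negative part is effective, the existence of such a decomposition would force $\overline{D}_{a,b}$ itself to be pseudo-effective. So no Zariski decomposition exists in that range. From now on I assume $a+b\geq 1$, so that $\Theta_{a,b}=[\vartheta_{a,b},\theta_{a,b}]$ is a non-empty closed interval containing the maximizer $b/(a+b)$.

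For the existence statement, the strategy is to exhibit the explicit $\overline{P}_{a,b}=(P_{a,b},p_{a,b})$ as stated and verify the three defining properties of the positive part following the characterization in \cite{MoArZariski}: (i) $\overline{P}_{a,b}$ is a nef arithmetic $\RR$-divisor of $\Tpsh$-type, (ii) $\overline{P}_{a,b}\leq\overline{D}_{a,b}$, and (iii) $\avol(\overline{P}_{a,b})=\avol(\overline{D}_{a,b})$. These three conditions identify $\overline{P}_{a,b}$ uniquely with the positive part.

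Condition (ii) and the $\Tpsh$-type half of (i) reduce to routine calculus anchored on the two algebraic identities $\varphi_{a,b}(\theta_{a,b})=0$ and (when $\vartheta_{a,b}>0$) $\varphi_{a,b}(\vartheta_{a,b})=0$. Continuity and $C^1$ matching of the three pieces at each gluing circle $|z|^2=b(1-\tau)/(a\tau)$ for $\tau\in\{\vartheta_{a,b},\theta_{a,b}\}$ both collapse by direct substitution to these identities, yielding a globally $\Tpsh$-type $p_{a,b}$ with the correct logarithmic singularities at $z=0,\infty$ matching the coefficients of $P_{a,b}$. For (ii), the divisor difference $(1-\theta_{a,b})C_0+\vartheta_{a,b}C_\infty$ is effective, and on each outer region the function $\psi(r):=-(1-\tau)\log r+\log(ar+b)$ with $r=|z|^2$ has a unique critical point at $r=b(1-\tau)/(a\tau)$, where $\psi=0$; since $\psi\to+\infty$ at the other endpoint of each outer region, one gets $g_{a,b}-p_{a,b}=\psi\geq 0$, so $\overline{P}_{a,b}\leq\overline{D}_{a,b}$.

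The main obstacle is verifying the arithmetic nefness in (i) and the volume equality (iii). For nefness beyond the $\Tpsh$-type property, I would compute $\adeg(\sigma_\alpha^{*}\overline{P}_{a,b})$ for every horizontal section $\sigma_\alpha$ coming from $\alpha\in\PP^1(\overline{\QQ})\setminus\{0,\infty\}$: writing $\alpha=a_\alpha/b_\alpha$ in coprime integers and combining the finite contribution $\theta_{a,b}\log|a_\alpha|-\vartheta_{a,b}\log|b_\alpha|$ with $\tfrac12 p_{a,b}(\alpha)$, the two outer regions collapse to $(\theta_{a,b}-\vartheta_{a,b})\log\max\{|a_\alpha|,|b_\alpha|\}\geq 0$, while the middle region needs a direct inequality derived from concavity of $\varphi_{a,b}$. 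For (iii), since $\overline{P}_{a,b}$ is nef one has $\avol(\overline{P}_{a,b})=\adeg(\overline{P}_{a,b}^{\,2})$; I would compute this arithmetic self-intersection by splitting the Green-function integral over the three regions and match it to $\int_{\Theta_{a,b}}\varphi_{a,b}(x)\,dx$ from Theorem~\ref{thm:positivity:D:a:b}(1). The volume computation is the most delicate step, since it requires carefully combining the boundary values at the two gluing circles (where $\varphi_{a,b}$ vanishes) with the Green-function contributions and the integral of $\varphi_{a,b}$ over $[\vartheta_{a,b},\theta_{a,b}]$.
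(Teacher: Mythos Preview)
Your route diverges from the paper's. The paper never verifies $\avol(\overline{P}_{a,b})=\avol(\overline{D}_{a,b})$ directly; instead, in the big case $a+b>1$ it (a) invokes the general existence theorem \cite[Theorem~9.2.1]{MoArZariski}, (b) reads off the divisor part of the positive part from the asymptotic multiplicities $\mu_{C_0}(m\overline{D}_{a,b})=m(1-\theta_{a,b})$ and $\mu_{C_\infty}(m\overline{D}_{a,b})=m\vartheta_{a,b}$ computed via Proposition~\ref{prop:criterion:QQ:effective}, so that the positive part is already known to have the form $(P_{a,b},q)$ for some Green function $q$, and (c) pins down $q=p_{a,b}$ by the maximum principle: since $\overline{P}_{a,b}$ is nef and $\leq\overline{D}_{a,b}$ one has $p_{a,b}\leq q\leq g_{a,b}$, hence $q=p_{a,b}$ on the middle annulus; then $u:=q-p_{a,b}$ is nonnegative, continuous, subharmonic on each cap, and vanishes at the centre (because $\adeg(\rest{(P_{a,b},q)}{C_0})=0$), so $u\equiv 0$. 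The boundary case $a+b=1$ is then obtained as a limit $t\to 1^+$ from the big divisors $\overline{D}_{ta,tb}$. Your volume-matching approach is more laborious, and more seriously the assertion that ``nef, $\leq\overline{D}$, and equal volume'' uniquely identifies the positive part is not the definition (which is \emph{maximality} among nef subdivisors) and is not something you can simply cite from \cite{MoArZariski}; it would itself need a Hodge-index-type argument.

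There is also a gap in your nefness check. Horizontal $1$-dimensional closed integral subschemes of $\PP^1_{\ZZ}$ are closures of closed points of $\PP^1_{\QQ}$, not sections through $\overline{\QQ}$-points, so ``writing $\alpha=a_\alpha/b_\alpha$ in coprime integers'' only handles the rational sections, and your outer-region formula $(\theta_{a,b}-\vartheta_{a,b})\log\max\{|a_\alpha|,|b_\alpha|\}$ has no meaning otherwise. The paper sidesteps any section-by-section computation with a one-line trick: it shows $\overline{P}_{a,b}-\theta_{a,b}\widehat{(z)}$ is \emph{effective} (since $p_{a,b}(z)+\theta_{a,b}\log|z|^2$ equals $r_1\geq 0$ on the inner part and $(\theta_{a,b}-\vartheta_{a,b})\log|z|^2\geq 0$ on the outer), and because $\widehat{(z)}$ is principal this immediately gives $\adeg(\rest{\overline{P}_{a,b}}{C})\geq 0$ for every horizontal $C\neq C_0,C_\infty$.
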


\begin{proof}
First we consider the case where $\overline{D}_{a,b}$ is big, that is, $a + b > 1$ by Theorem~\ref{thm:positivity:D:a:b}.
In this case, $0 \leq \vartheta_{a,b} < \theta_{a,b} \leq 1$.
The existence of the Zariski decomposition follows from \cite[Theorem~9.2.1]{MoArZariski}.
Here we consider functions 
\begin{align*}
& r_1 : \left\{ z \in \CC \mid \vert z \vert < \sqrt{\frac{b(1-\vartheta_{a,b})}{a\vartheta_{a,b}}} \right\} \to \RR \\
\intertext{and}
& r_2 : \left\{ z \in \PP^1(\CC) \mid \vert z \vert > \sqrt{\frac{b(1-\theta_{a,b})}{a\theta_{a,b}}} \right\} \to \RR
\end{align*}
given by
\[
r_1(z) = \begin{cases}
0  & \text{if $\vert z \vert < \sqrt{\frac{b(1-\theta_{a,b})}{a\theta_{a,b}}}$}, \\
-(1 - \theta_{a,b}) \log \vert z \vert^2 + \log (a \vert z \vert^2 + b) & 
\text{if $\sqrt{\frac{b(1-\theta_{a,b})}{a\theta_{a,b}}} \leq \vert z \vert < \sqrt{\frac{b(1-\vartheta_{a,b})}{a\vartheta_{a,b}}}$}. \\
\end{cases}
\]
and
\[
r_2(z) = \begin{cases}
-(1 - \vartheta_{a,b}) \log \vert z \vert^2 + \log (a \vert z \vert^2 + b) & 
\text{if $\sqrt{\frac{b(1-\theta_{a,b})}{a\theta_{a,b}}} < \vert z \vert \leq \sqrt{\frac{b(1-\vartheta_{a,b})}{a\vartheta_{a,b}}}$}, \\
0 & \text{if $\vert z \vert > \sqrt{\frac{b(1-\vartheta_{a,b})}{a\vartheta_{a,b}}}$}.
\end{cases}
\]
In order to see that $p_{a,b}$ is a $P_{a,b}$-Green function of $(\Tpsh \cap C^0)$-type on $\PP^1(\CC)$,
it is sufficient to check that $r_1$ and $r_2$ are  continuous and subharmonic on each area.
Let us see that $r_1$ is continuous and subharmonic. If $\theta_{a,b} = 1$, then the assertion is obvious, so that we may assume that
$\theta_{a,b} < 1$.
First of all, note that $r_1(z) = 0$ if $\vert z \vert = \sqrt{\frac{b(1-\theta_{a,b})}{a\theta_{a,b}}}$,  and hence
$r_1$ is continuous. It is obvious that $r_1$ is subharmonic on 
\[
\left\{ z \in \CC \ \left| \ \vert z \vert <\sqrt{\frac{b(1-\theta_{a,b})}{a\theta_{a,b}}}\right\}\right. \cup
\left\{ z \in \CC \ \left| \   \sqrt{\frac{b(1-\theta_{a,b})}{a\theta_{a,b}}} <  \vert z \vert < \sqrt{\frac{b(1-\vartheta_{a,b})}{a\vartheta_{a,b}}} \right\}\right..
\]
Note that
\[
\log(a x + b) - (1-\theta_{a,b})\log(x) \geq 0\quad(\forall x > 0)
\]
and the equality holds if and only if $x =\frac{b(1-\theta_{a,b})}{a\theta_{a,b}}$.
Thus $r_1 \geq 0$. Therefore, if $\vert z \vert = \sqrt{\frac{b(1-\theta_{a,b})}{a\theta_{a,b}}}$, then
\[
r_1(z) = 0 \leq \frac{1}{2\pi} \int_0^{2\pi} r_1(z + \epsilon e^{\sqrt{-1}t}) dt
\]
for a small positive real number $\epsilon$, and hence $r_1$ is subharmonic.
In a similar way, we can check that $r_2$ is continuous and subharmonic.

Next let us see that $\overline{P}_{a,b}$ is nef.
As $r_1(0) = 0$ and $r_2(\infty) = 0$, we have 
\[
\adeg(\rest{\overline{P}_{a,b}}{C_0}) = \adeg(\rest{\overline{P}_{a,b}}{C_{\infty}}) = 0.
\]
Note that 
\[
\overline{P}_{a,b} - \theta_{a,b} \widehat{(z)} = ((\theta_{a,b} - \vartheta_{a,b})C_{\infty}, p_{a,b}(z) + \theta_{a,b}\log\vert z \vert^2)
\]
and
\[
p_{a,b}(z) +  \theta_{a,b}\log\vert z \vert^2
= 
\begin{cases}
 r_1 & \text{if $\vert z \vert \leq \sqrt{\frac{b(1-\vartheta_{a,b})}{a\vartheta_{a,b}}}$}, \\
(\theta_{a,b} - \vartheta_{a,b}) \log \vert z \vert^2 & \text{if $\vert z \vert > \sqrt{\frac{b(1-\vartheta_{a,b})}{a\vartheta_{a,b}}}$}.
\end{cases}
\]
Therefore, $p_{a,b}(z) +  \theta_{a,b}\log\vert z \vert^2 \geq 0$ on $\PP^1(\CC)$, which means that
$\overline{P}_{a,b} - \theta_{a,b} \widehat{(z)}$ is effective.
Let $C$ be a $1$-dimensional closed integral subscheme of $\PP^1_{\ZZ}$ with
$C \not= C_0, C_{\infty}$.
Then
\[
\adeg(\rest{\overline{P}_{a,b}}{C}) = \adeg(\rest{((\theta_{a,b} - \vartheta_{a,b})C_{\infty}, p_{a,b} + \theta_{a,b}\log\vert z \vert^2)}{C}) \geq 0.
\]

Let us choose a positive number $m$ such that $\aH(\PP^1_{\ZZ}, m\overline{D}_{a,b}) \not= \{ 0 \}$.
By using Proposition~\ref{prop:criterion:QQ:effective}, we have
$\mu_{C_0}(m\overline{D}_{a,b}) = m - m\theta_{a,b}$ and $\mu_{C_{\infty}}(m\overline{D}_{a,b}) = m\vartheta_{a,b}$
because
\[
\nu_{C_0}(nm\overline{D}_{a,b}) = nm -  \lfloor  nm \theta_{a,b} \rfloor\quad\text{and}\quad
\nu_{C_{\infty}}(nm\overline{D}_{a,b}) = \lceil nm\vartheta_{a, b} \rceil
\]
(for the definitions of $\nu$ and $\mu$, see \cite[SubSection~6.5]{MoArZariski}).
Thus
the positive part of $\overline{D}_{a,b}$
can be written by a form 
$(P_{a,b}, q)$,
where
$q$ is a $P_{a,b}$-Green function of $(\Tpsh \cap C^{0})$-type on $\PP^1(\CC)$ (cf. \cite[Claim~9.3.5.1 and Proposition~9.3.1]{MoArZariski}).
Note that $\overline{P}_{a,b}$ is nef and $\overline{P}_{a,b} \leq \overline{D}_{a,b}$, so that 
\[
p_{a, b}(z) \leq q(z) \leq -\log \vert z \vert^2 + \log(a\vert z \vert^2 + b).
\]
We choose a continuous function $u$ such that $p_{a,b} + u = q$.
Then $u(z) = 0$ on 
\[
\sqrt{\frac{b(1-\theta_{a,b})}{a\theta_{a,b}}} \leq \vert z \vert \leq \sqrt{\frac{b(1-\vartheta_{a,b})}{a\vartheta_{a,b}}}.
\]
Moreover, since $q(z) = -\theta_{a,b} \log \vert z \vert^2 + u(z)$ on 
$\vert z \vert \leq \sqrt{\frac{b(1-\theta_{a,b})}{a\theta_{a,b}}}$,
$u$ is subharmonic on $\vert z \vert \leq \sqrt{\frac{b(1-\theta_{a,b})}{a\theta_{a,b}}}$.
On the other hand,
$u(0) = 0$ because 
\[
\adeg (\rest{(P_{a,b}, q)}{C_0}) = u(0) = 0.
\]
Therefore, $u = 0$ on $\vert z \vert \leq \sqrt{\frac{b(1-\theta_{a,b})}{a\theta_{a,b}}}$
by the maximal principle.
In a similar way, we can see that $u = 0$ on $\vert z \vert \geq \sqrt{\frac{b(1-\vartheta_{a,b})}{a\vartheta_{a,b}}}$.

\medskip
Next we consider the case where $a+ b = 1$. 
As $\log$ is a concave function,
\[
(1-b) \log \vert z \vert^2 \leq \log((1-b)\vert z \vert^2 + b)
\]
on $\PP^1(\CC)$. Thus 
$b\widehat{(z)} \leq \overline{D}_{a,b}$, and hence the Zariski decomposition of $\overline{D}_{a,b}$ exists by
\cite[Theorem~9.2.1]{MoArZariski}.
Let $\overline{P}$ be the positive part of $\overline{D}_{a,b}$. Then $b\widehat{(z)} \leq \overline{P}$.

Let us consider the converse inequality.
Let $t$ be a real number with $t > 1$.
Since $\overline{P} \leq \overline{D}_{a,b} \leq \overline{D}_{ta,tb}$,
we have $\overline{P} \leq \overline{P}_{ta,tb}$ because $\overline{P}_{ta,tb}$ is the positive part of $\overline{D}_{ta,tb}$
by the previous observation.  Since $\varphi_{ta,tb} = \varphi_{a,b} + \log (t)$,
we have $\lim_{t\to 1} \vartheta_{ta,tb} = \lim_{t\to 1} \theta_{ta,tb} = b$.
Therefore, we can see
\[
\lim_{t\to 1} \overline{P}_{ta, tb} = \overline{P}_{a,b} = b\widehat{(z)}.
\]
Thus $\overline{P} \leq b\widehat{(z)}$.

\medskip
Finally we consider the case where $a+ b < 1$.
Then, by Theorem~\ref{thm:positivity:D:a:b}, $\overline{D}_{a,b}$ is not pseudo-effective.
Thus the Zariski decomposition does not exist by \cite[Proposition~9.3.2]{MoArZariski}.
\end{proof}

\bigskip

\end{document}